\newcommand{\rg}{\rangle}
\newcommand{\define}{\widehat{=}}
\newcommand{\xx}{\mathbf{x}}
\newcommand{\uu}{\mathbf{u}}
\newcommand{\fb}{\mathbf{f}}
\newcommand{\ud}{\mathrm{d}}
\newtheorem{theorem}{Theorem}
\newtheorem{example}[theorem]{Example}
\newtheorem{definition}[theorem]{Definition}
\newenvironment{remark}{\textbf{Remark}}
\title{\bf Automatically Discovering Relaxed Lyapunov
Functions\\ for Polynomial Dynamical Systems
\thanks{This work is
supported in part by the projects NSFC-90718041, NSFC-60736017 and
NSFC-60970031.}}
\author{\authorblockN{Jiang Liu,  Naijun Zhan and Hengjun Zhao}
\authorblockA{State Key Lab. of Computer Science, Institute of
Software, Chinese Academy of Sciences\\
No.~4 South Fourth Street,
Zhong Guan Cun, Beijing, 100190, P.R. China\\
Email: \{liuj, znj, zhaohj\}@ios.ac.cn}}
\begin{document}

\maketitle

\begin{abstract}
The notion of Lyapunov function plays a key role in design and
verification of dynamical systems, as well as hybrid and
cyber-physical systems. In this paper, to analyze the asymptotic
stability of a dynamical system, we generalize standard Lyapunov
functions to \emph{relaxed Lyapunov functions} (RLFs), by
considering higher order Lie derivatives of certain functions along
the system's vector field. Furthermore, we present a complete method
to automatically discovering polynomial RLFs for polynomial
dynamical systems (PDSs). Our method is complete in the sense that
it is able to discover all polynomial RLFs by enumerating all
polynomial templates for any PDS.
\end{abstract}

\section{Introduction}
The notion of Lyapunov function plays a very important role in
design and verification of dynamical systems, in particular, in
performance analysis, stability analysis and controller synthesis of
complex dynamical and controlled systems
\cite{Haddad-Che,Khalil,Li-Slotine}. \,In recent years, people
realized that the notion is quite helpful to safety verification of
hybrid and cyber-physical systems as well \cite{Tabuada}.

However, the following two issues hinder the application of Lyapunov
functions in practice.  Firstly,  it is actually not necessary for
the first-order Lie derivative of a Lyapunov function to be strictly
negative  to guarantee  asymptotic stability, which is shown by
\emph{LaSalle's Invariance Principle} \cite{Khalil}. Such a
condition  could limit to scale up the method. Secondly, in general
there is no effective way so far to find Lyapunov functions,
although many methods have been proposed by different experts using
their field expertise.

To address the above two issues, in this paper, we first generalize
the standard concept of Lyapunov function to \emph{relaxed Lyapunov
function} (RLF) for asymptotic stability analysis. Compared with the
conventional definition of Lyapunov function,  the first non-zero
higher order Lie derivative of RLF is required to be negative,
rather than its first-order Lie derivative. Such a relaxation
extends the set of admissible functions that can be used to prove
asymptotic stability.

Another contribution of this paper is that we present a complete
method to automatically discovering polynomial RLFs for polynomial dynamical systems (PDSs).
The basic idea of  our method  is to predefine a parametric polynomial as a template of
RLF first, and then utilize
the Lie derivatives of the template at different orders to
generate constraint on the parameters, and
finally solve the resulting constraint.
Our method is complete in the sense that it
is able to generate all polynomial RLFs by enumerating all polynomial templates for any PDS.
\newline

\noindent {\bf{Related Work.}} In \cite{RS}, the same terminology
``relaxed Lyapunov function" is used, with a different definition.

The idea of applying higher order Lie derivatives to analyze
asymptotic stability is not new. For example,  in \cite{Butz,Mei-Ni}
the authors resorted to certain linear combinations  of higher order
Lie derivatives with non-negative coefficients such that the
combination is always negative. This method could be included in the
framework of vector Lyapunov functions method
\cite{Matrosov,Ner-Had}.  Our method is essentially different from
theirs because an RLF only requires its first non-zero higher order
Lie derivative to be negative.

In the literature, there is a lot of work on  constructing Lyapunov
functions.  For instance, in
\cite{Gurvits,Liberzon,Agrachev-Liberzon}, methods for constructing
common quadratic Lyapunov functions for linear systems were
proposed, which were generalized in  \cite{SNS} and
\cite{Vu-Liberzon} for nonlinear systems wherein the  generated Lyapunov functions are not necessarily quadratic. Another useful technique is the linear matrix inequality (LMI) method introduced in \cite{Jo-Ran} and \cite{Pet-Ben}, which enables us to utilize the results of numerical optimization for discovering  piecewise quadratic Lyapunov functions. Based on sums-of-squares (SOS)
decomposition and semi-definite programmng (SDP) \cite{Parrilo}, a
method for constructing piecewise high-degree polynomial and
piecewise non-polynomial Lyapunov functions was proposed in
\cite{Pra-Pap} and \cite{Pra-Pap1}.  The SOS and SDP based method
was also used in \cite{TP} to search for control Lyapunov functions
for polynomial systems. In \cite{SXX}, the authors proposed a new
method for computing Lypunov functions for polynomial systems by
solving semi-algebraic constraint using their tool DISCOVERER
\cite{Xia07}. Approaches to constructing Lyapunov functions beyond
polynomials using radial basis functions were proposed in
\cite{Giesl2,Giesl3}.

Our method has the following features compared to the related work.
Firstly, it generates relaxed Lyapunov functions rather than
conventional Lyapunov functions. Secondly, it is able to discover
all polynomial RLFs by enumerating all polynomial templates
for any PDS, whereas the Krasovskii's method \cite{Kra} and Zubov's
method \cite{Zubov} can only produce  Lyapunov functions of special
forms. Thirdly, the LMI method and SOS method are numerical, while
our method is symbolic, which means it could provide a
mathematically rigorous framework for the stability analysis of
polynomial dynamical systems.
%\newline

\noindent {\bf{Structure:}} The rest of this paper is organized as
follows. In Section \ref{sec:foundations}, the
theoretical foundations are presented. 
Section \ref{sec:RLF} shows a new criterion for asymptotic stability using the notion of relaxed Lyapunov functions.
In Section \ref{sec:main-result} we present a sound and complete
method and a corresponding algorithm for automatically  discovering
polynomial RLFs on polynomial dynamical systems. The method is
illustrated by an example in Section \ref{sec:example}. Finally, we
conclude this paper and discuss possible future work in Section
\ref{sec:conclusion}.

\section{Theoretical Foundations}\label{sec:foundations}

In this section, we present the fundamental materials based on which
we develop our method.

\subsection{Polynomial Ideal Theory}

Let $\mathbb{K}$ be an algebraic field, and $\mathbb{K}[x_1, x_2,
\dots, x_n]$ denote the polynomial ring over $\mathbb{K}$.
Customarily, let $\xx$ denote the $n$-tuple $(x_1, \cdots, x_n)$.
Then  $\mathbb{K}[x_1, x_2, \dots, x_n]$ can be written as
$\mathbb{K}[\xx]$ for short, and a polynomial in $\mathbb{K}[x_1,
x_2, \dots, x_n]$ can simply be written as $p(\xx)$ or $p$.
Particularly, $\mathbb{K}$ will be taken as the real field $\mathbb
R$ in this paper, and $\xx$ takes value from the $n$-dimensional
Euclidean space $\mathbb{R}^n$.

In our method we will use polynomials with undetermined
coefficients, called parametric polynomials or \emph{templates}.
Such polynomials are denoted by $p(\uu,\xx)$, where
$\uu=(u_1,u_2,\ldots,u_t)$ is a $t$-tuple of parameters. A
parametric polynomial $p(\uu,\xx)$ in $\mathbb{R}[x_1, x_2, \dots,
x_n]$ with real parameters can be seen equivalently as a regular
polynomial in $\mathbb{R}[u_1, u_2, \ldots, u_t, x_1, x_2, \ldots,
x_n]$. Given $\uu_0\in \mathbb{R}^t$, we call the polynomial
$p_{\uu_0}(\xx)$ resulted by substituting $\uu_0$ for $\uu$ in
$p(\uu,\xx)$ an \emph{instantiation }of $p(\uu,\xx)$.

The following are some fundamental results relative to
polynomial ideals, which can be found in \cite{clo}.
\begin{definition}
A subset $I\subseteq \mathbb{K}[\xx]$ is called an ideal iff
\begin{enumerate}
\item[(a)] $0\in I$;
\item[(b)] If $p(\xx), g(\xx)\in I$, then $p(\xx)+g(\xx)\in I$;
\item[(c)] If $p(\xx)\in I$, then $p(\xx)h(\xx)\in I$  for any $h(\xx)\in
\mathbb{K}[\xx]$.
\end{enumerate}
\end{definition}

It is easy to check that if $p_1, \cdots, p_m\in \mathbb{K}[\xx]$, then
\[\<p_1, \cdots, p_m\rg=\{\sum_{i=1}^{m}p_ih_i\mid \forall i\in [1,m].\, h_i \in
\mathbb{K}[\xx]\}\] is an ideal. In general, we say
an ideal $I$ is {\itshape generated} by polynomials $g_{1}, g_2,
\dots, g_k\in \mathbb{K}[\xx]$ if $I=\<g_{1}, g_2,
\dots, g_k\rg,$ where all $g_i$ for $i \in [1,k]$ are called {\itshape
generators} of $I$.  In fact, we have

\begin{theorem}[Hilbert Basis Theorem]\label{basis}\ \
Every ideal $I$ $\subseteq \mathbb{K}[\xx]$ has a
finite generating set. That is, $I=\<g_{1}, g_2, \dots, g_k\rg$ for
some $g_{1}, g_2, \dots, g_k \in \mathbb{K}[\xx]$.
\end{theorem}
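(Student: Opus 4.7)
The plan is to proceed by induction on the number of variables $n$, which is the classical Hilbert argument.

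For the base case $n=1$, the ring $\mathbb{K}[x_1]$ admits Euclidean division. Given a nonzero ideal $I$, I would pick $g \in I$ of minimal degree; for any $f \in I$, writing $f = qg + r$ with $\deg r < \deg g$ forces $r = f - qg \in I$, so $r = 0$ by minimality, and $I = \<g\rg$ is generated by one element.

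For the inductive step, assume the theorem for $n-1$ variables, and write $\mathbb{K}[\xx] = R[x_n]$ with $R = \mathbb{K}[x_1,\ldots,x_{n-1}]$. Given an ideal $I \subseteq R[x_n]$, I would introduce the auxiliary set $L(I) \subseteq R$ consisting of $0$ together with all leading coefficients (in $x_n$) of nonzero members of $I$; a short check shows $L(I)$ is an ideal of $R$. By the induction hypothesis $L(I) = \<a_1,\ldots,a_s\rg$; lift each $a_i$ to some $f_i \in I$ with leading coefficient $a_i$ and degree $d_i$ in $x_n$, and set $N = \max_i d_i$. Next, for each $d$ with $0 \le d < N$, apply the induction hypothesis again to the ideal $L_d(I) \subseteq R$ of leading coefficients of elements of $I$ of degree exactly $d$, obtaining finitely many generators witnessed by polynomials $g_{d,1},\ldots,g_{d,s_d} \in I$. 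I would then claim that the finite collection
\[
\{f_1,\ldots,f_s\}\cup\{g_{d,j} : 0\le d<N,\ 1\le j\le s_d\}
\]
generates $I$.

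To verify the claim I would fix an arbitrary $f \in I$ and perform a descending induction on $\deg_{x_n} f$. If $\deg_{x_n} f = D \ge N$, the leading coefficient $c$ of $f$ lies in $L(I)$, so $c = \sum_i h_i a_i$ for some $h_i \in R$; then $f - \sum_i h_i\, x_n^{D-d_i} f_i$ belongs to $I$ and has strictly smaller $x_n$-degree. Repeating this reduction drives the degree below $N$, after which I apply the same idea within each degree stratum $d < N$ using the $g_{d,j}$ instead of the $f_i$. Finitely many reductions terminate at $0$, exhibiting $f$ as an $R[x_n]$-combination of the proposed generators.

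The main obstacle I anticipate is purely bookkeeping: one must show the reduction strictly decreases $\deg_{x_n}$ at every step and that it eventually terminates, which needs the two-stage split (first bring the degree below $N$ using the $f_i$, then eliminate each fixed degree using the $g_{d,j}$). Once the leading-coefficient ideals $L(I)$ and $L_d(I)$ are recognized as genuine ideals in $R$, the induction hypothesis does all the heavy lifting, and no further nontrivial algebra is required.
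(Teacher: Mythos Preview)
Your argument is the classical Hilbert induction on the number of variables and is correct; the only minor looseness is in the definition of $L_d(I)$, where you should explicitly include $0$ and note that multiplying by $r\in R$ preserves the $x_n$-degree, so that $L_d(I)$ really is an ideal of $R$.

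However, there is nothing to compare against: the paper does not prove this theorem at all. It is quoted in Section~\ref{sec:foundations} as standard background from~\cite{clo}, with no argument supplied. So your proposal is not an alternative to the paper's proof but rather a complete proof where the paper simply invokes the literature. If the goal is to match the paper, a one-line citation suffices; if the goal is a self-contained write-up, your induction is the right choice and needs only the small clarification above.
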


From this result, it is easy to see that

\begin{theorem}[Ascending Chain Condition]\label{ACC}\ \
For any ascending chain
\[I_1\subseteq I_2 \subseteq \cdots \subseteq I_m \subseteq \cdots\] of
ideals in polynomial ring $\mathbb{K}[\xx]$, there
must be an $N$ such that for all $m\geq N$, $I_m=I_N$.
\end{theorem}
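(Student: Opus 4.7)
The plan is to deduce the Ascending Chain Condition directly from the Hilbert Basis Theorem by passing to the union of the chain. First I would define $I \define \bigcup_{m \geq 1} I_m$ and verify that $I$ is itself an ideal of $\mathbb{K}[\xx]$. The three axioms in the definition of an ideal are straightforward to check: $0 \in I_1 \subseteq I$; if $p, g \in I$, then each lies in some $I_i, I_j$, and since the chain is ascending both lie in $I_{\max(i,j)}$, so $p+g \in I_{\max(i,j)} \subseteq I$; finally, if $p \in I$, then $p \in I_i$ for some $i$, and $ph \in I_i \subseteq I$ for every $h \in \mathbb{K}[\xx]$.

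Next I would apply the Hilbert Basis Theorem (Theorem \ref{basis}) to $I$ to obtain a finite generating set $g_1, g_2, \dots, g_k$ such that $I = \langle g_1, \dots, g_k \rangle$. Since each generator $g_j$ lies in $I$, by the definition of the union there exists an index $m_j$ with $g_j \in I_{m_j}$. Setting $N \define \max_{1 \leq j \leq k} m_j$ and using the fact that $I_{m_j} \subseteq I_N$, every generator satisfies $g_j \in I_N$, and hence $I = \langle g_1, \dots, g_k \rangle \subseteq I_N$.

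Finally, for any $m \geq N$ the chain inclusions give $I_N \subseteq I_m \subseteq I \subseteq I_N$, which forces $I_m = I_N$. This is exactly the conclusion of the theorem.

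I do not anticipate a serious obstacle: the only substantive input is the Hilbert Basis Theorem, which the paper cites as Theorem \ref{basis}. The main point requiring care is verifying that the union of an ascending chain of ideals is again an ideal, and in particular that closure under addition uses the ascending property of the chain in an essential way (the union of an arbitrary family of ideals need not be an ideal). Once that is in hand, the proof is essentially mechanical.
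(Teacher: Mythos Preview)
Your proposal is correct and follows exactly the approach the paper indicates: the paper does not spell out a proof but simply remarks that the Ascending Chain Condition is an easy consequence of the Hilbert Basis Theorem, and your argument via the union $\bigcup_{m\geq 1} I_m$ is the standard way to make that deduction precise.
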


\subsection{Dynamical Systems and Stability}
We summarize some fundamental theories of dynamical systems here.
For details please refer to  \cite{Haddad-Che,Khalil,Li-Slotine}.
\subsubsection{Dynamical Systems}
We consider autonomous dynamical systems modeled by first-order ordinary differential equations
\begin{equation}\label{eq:ode}
  \dot \xx= \fb(\xx) \enspace,
\end{equation}
where $\xx\in\mathbb{R}^n$ and $\fb$ is a vector function from $\mathbb R^n$ to $\mathbb R^n$, which is also called a vector field in $\mathbb R^n$.

In this paper, we focus on special nonlinear dynamical systems whose vector fields are defined by polynomials.
\begin{definition}[Polynomial Dynamical System]\label{dfn:ads}\ \
Suppose $\fb=(f_1,f_2,\cdots,f_n)$ in (\ref{eq:ode}). Then (\ref{eq:ode}) is called a \emph{polynomial dynamical system} (PDS for short) if for every $1\leq i\leq n$, $f_i$ is a polynomial in $\mathbb R[\xx]$.
\end{definition}

If $\fb$ satisfies the local Lipschitz condition, then given
$\xx_0\in \mathbb R^n$, there exists a unique solution $\xx(t)$ of
(\ref{eq:ode}) defined on $(a, b)$ with $a< 0< b$ s.t.
$$\forall t\in (a,b).\,{\ud \xx(t)\over \ud t} = \fb (\xx(t))\quad \mathrm{and}\quad  \xx(0)=\xx_0\enspace .$$
We call $\xx(t)$ on $[0,b)$ the \emph{trajectory}  of (\ref{eq:ode}) starting from initial point $\xx_0$.

Let $\sigma(\xx)$ be a function from $\mathbb R^n$ to $\mathbb R$. Suppose both $\sigma$ and $\fb$ are differentiable in $\xx$ at any
order $n\in \mathbb N$. Then we can inductively define the {\itshape
Lie derivatives} of $\sigma$ along $\fb$, i.e.
 $L^k_{\fb}\sigma: \mathbb R^n\rightarrow \mathbb R$ for $k\in \mathbb N$, as follows:
\begin{itemize}
\item $L^0_{\fb} \sigma(\xx)=\sigma(\xx)$,
\item $L^k_{\fb} \sigma(\xx)=\left( \frac{\partial}{\partial \xx} L^{k-1}_{\fb} \sigma(\xx), \fb(\xx)\right)$, for $k>0$,
\end{itemize}
where $(\cdot, \cdot)$ is the inner product of two vectors, i.e. $(
\,(a_1, \ldots, a_n), ( b_1, \ldots, b_n )\,) =\sum_{i=1}^n a_ib_i.$

Polynomial functions are sufficiently smooth, so given a PDS
$\mathcal{P}$ and a polynomial $p$, the vector field $\fb$ of
$\mathcal{P}$ satisfies the local Lipschitz condition, and the
higher order Lie derivatives of $p$ along $\fb$ are well defined and
are all polynomials.  For a parameterized polynomial $p(\uu,\xx)$,
we can define $L_{\fb}^k p(\uu,\xx): \mathbb R^n\rightarrow \mathbb
R$ by seeing  $\uu$ as undetermined constants rather than variables.
In the sequel we will implicitly employ these facts.
\begin{example}\label{eg:Lie-derv}
Suppose $\fb=(-x,y)$ and $p(x,y)=x+y^2$. Then $L^0_{\fb} p=x+y^2$, $L^1_{\fb} p=-x+2y^2$, $L^2_{\fb} p=x+4y^2$, $L^3_{\fb} p=-x+8y^2$ .
\end{example}

\subsubsection{Stability}
The following are classic results of stability of dynamical systems in the sense of Lyapunov.
\begin{definition}\
A point $\xx_e\in \mathbb R^n$ is called an {\itshape equilibrium or
critical point} of (\ref{eq:ode}) if  $\fb(\xx_e)=\mathbf{0}$.
\end{definition}

We assume $\xx_e=\mathbf{0}$ w.l.o.g from now on.

\begin{definition}\
Suppose $\mathbf{0}$ is an equilibrium of (\ref{eq:ode}). Then
\begin{itemize}
\item
$\mathbf{0}$ is called Lyapunov stable if for any
$\epsilon>0$, there exists a $\delta>0$ such that if $\|
\xx_0\|<\delta$, then for the corresponding solution $\xx(t)$ of (\ref{eq:ode}), $\|\xx(t)\|<\epsilon$ for all $t\geq 0$.

\item
$\mathbf{0}$ is called asymptotically stable if it is
Lyapunov stable and there exists a $\delta>0$ such that for any $\|
\xx_0\|<\delta$, the
corresponding solution $\xx(t)$ of (\ref{eq:ode}) can be extended to infinity and $\lim_{t\rightarrow
\infty}\xx(t)=\mathbf{0}$.
\end{itemize}
\end{definition}

Lyapunov  first provided a sufficient condition, using so-called \emph{Lyapunov
function}, for the Lyapunov stability as follows.
\begin{theorem}[Lyapunov Stability Theorem]\label{Lya}\
Suppose $ \mathbf{0}$ is an equilibrium point of
(\ref{eq:ode}). If there is an open set $ U\subset \mathbb R^n$ with
 $\mathbf{0} \in U$ and a continuous differentiable function $V: U\rightarrow \mathbb R$
such that
\begin{enumerate}
\item[(a)] $V(\mathbf{0})=0$,
\item[(b)]$V(\xx)>0$ for all $\xx\in U\backslash \{\mathbf{0}\}$  and
\item[(c)]$L^1_{\fb} V(\xx)\leq 0$ for all $\xx\in U$,
\end{enumerate}
then $\mathbf{0}$ is a stable equilibrium of (\ref{eq:ode}).  Moreover, if
condition (c) is replaced by
\begin{enumerate}
\item[(c')] $L^1_{\fb} V(\xx)< 0$ for all $\xx\in U\backslash\{\mathbf{0}\}$,
\end{enumerate}
then $\mathbf{0}$ is an asymptotically stable equilibrium of (\ref{eq:ode}).
Such  $V$ is called a {\itshape Lyapunov function}.
\end{theorem}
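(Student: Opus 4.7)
The plan is to handle the two parts of the theorem separately, both hinging on the chain-rule identity $\frac{\ud}{\ud t} V(\xx(t)) = L^1_{\fb} V(\xx(t))$, which is immediate from the definition of $L^1_{\fb}$ given in the paper. For Lyapunov stability I would use condition~(c) to make $V$ non-increasing along trajectories, and then exploit continuity of $V$ on small spheres around $\mathbf{0}$ to trap trajectories inside a prescribed $\epsilon$-ball.

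Concretely, given $\epsilon>0$, shrink it if necessary so that $\overline{B_\epsilon(\mathbf{0})}\subset U$, and set $m=\min\{V(\xx):\|\xx\|=\epsilon\}$. Conditions~(a)--(b) together with continuity of $V$ on the compact sphere guarantee $m>0$. Define the open set $\Omega=\{\xx\in B_\epsilon(\mathbf{0}):V(\xx)<m\}$, which contains $\mathbf{0}$ by~(a), and choose $\delta>0$ with $B_\delta(\mathbf{0})\subset\Omega$. For any trajectory $\xx(t)$ starting in $B_\delta(\mathbf{0})$, condition~(c) gives $V(\xx(t))\le V(\xx(0))<m$ for as long as $\xx(t)$ stays in $U$. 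A standard boundary argument (the trajectory is continuous and cannot cross the sphere $\|\xx\|=\epsilon$ without $V$ first reaching $m$) then forces $\xx(t)\in B_\epsilon(\mathbf{0})$ for all $t\ge 0$, which proves stability and also that the solution is extendible to $[0,\infty)$.

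For the asymptotic case, fix $\xx_0\in B_\delta(\mathbf{0})\setminus\{\mathbf{0}\}$. By~(c') the map $t\mapsto V(\xx(t))$ is strictly decreasing so long as $\xx(t)\ne\mathbf{0}$ and is bounded below by $0$; thus it admits a limit $L\ge 0$. Since the trajectory lives in the compact set $\overline{B_\epsilon(\mathbf{0})}$, by Bolzano--Weierstrass there is $t_k\to\infty$ with $\xx(t_k)\to\xx^*$, and continuity gives $V(\xx^*)=L$. I would then rule out $L>0$ by contradiction: if $L>0$ then $\xx^*\ne\mathbf{0}$, so $L^1_{\fb}V(\xx^*)<0$; flowing forward a short fixed time $\tau$ from $\xx^*$ yields $V(\varphi_\tau(\xx^*))<L$, and continuous dependence of the flow on initial data transfers this to $V(\xx(t_k+\tau))<L$ for large $k$, contradicting $V(\xx(t))\ge L$. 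Hence $L=0$, and by~(b) this forces $\xx(t_k)\to\mathbf{0}$; monotonicity of $V$ along the whole trajectory then upgrades the subsequential limit to $\lim_{t\to\infty}\xx(t)=\mathbf{0}$.

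The main obstacle is the last step: bridging from ``$V(\xx(t))$ converges'' to ``$\xx(t)\to\mathbf{0}$''. Strict decrease of $V$ alone does not immediately rule out a nonzero $\omega$-limit point, so one must combine compactness of the confining ball with continuous dependence on initial conditions (or equivalently a LaSalle-style invariance observation) to exclude any $L>0$. Every other step is essentially a continuity or compactness argument of a kind that should go through without surprise.
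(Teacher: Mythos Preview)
Your argument is the standard textbook proof and is correct in all essentials; the only place that deserves one more sentence is the final upgrade from $V(\xx(t))\to 0$ to $\xx(t)\to\mathbf{0}$, which follows because any other accumulation point $\mathbf{y}$ of the (precompact) trajectory would satisfy $V(\mathbf{y})=0$ and hence $\mathbf{y}=\mathbf{0}$ by~(b).

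As for comparison with the paper: there is nothing to compare. The paper does not prove Theorem~\ref{Lya}; it is quoted as classical background material with a pointer to the references \cite{Haddad-Che,Khalil,Li-Slotine}. Your write-up is exactly the kind of proof one finds in those sources, so it is entirely appropriate here.
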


For asymptotic stability,  we have
Barbashin-Krasovskii-LaSalle (BKLS) Principle which relaxes condition $(c\textrm{'})$ in Theorem \ref{Lya}.

\begin{theorem}[BKLS Principle] \label{bkls} \
Suppose there exists $V$ satisfying the conditions (a), (b) and (c)
in Theorem \ref{Lya}. If the set $\mathcal M\,\define \, \{\xx\in \mathbb R^n\mid
L^1_{\fb} V(\xx)=0\}\cap U$ does not contain any trajectory of the
system besides the trivial trajectory $\xx(t)\equiv \mathbf{0}$,
then $\mathbf{0}$ is  asymptotically stable.
\end{theorem}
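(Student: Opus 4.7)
The plan is to derive the BKLS Principle by combining the Lyapunov stability already guaranteed by Theorem~\ref{Lya} with a LaSalle-style $\omega$-limit set argument. First, since $V$ satisfies conditions (a), (b), (c), Theorem~\ref{Lya} immediately yields that $\mathbf{0}$ is Lyapunov stable. So for any $\epsilon>0$ with $\overline{B_\epsilon}\subset U$, I can fix a $\delta>0$ such that every trajectory $\xx(t)$ with $\|\xx_0\|<\delta$ stays inside the compact set $\overline{B_\epsilon}\subset U$ for all $t\geq 0$. What remains is to show that $\xx(t)\to\mathbf{0}$ as $t\to\infty$ for any such $\xx_0$.

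Next, I would use $V$ as a monotone quantity along trajectories. Because $L^1_{\fb}V\leq 0$ on $U$ and the trajectory stays in $U$, the map $t\mapsto V(\xx(t))$ is non-increasing, and by (b) it is bounded below by $0$. Hence it converges to some $c\geq 0$. The plan is then to analyze the $\omega$-limit set
\[\omega(\xx_0)\,\define\,\{\mathbf{y}\in\mathbb R^n\mid \exists\, t_k\to\infty,\ \xx(t_k)\to \mathbf{y}\}.\]
Since the trajectory lies in the compact set $\overline{B_\epsilon}$, $\omega(\xx_0)$ is non-empty and compact, and by continuity of $V$ one gets $V\equiv c$ on $\omega(\xx_0)$. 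The main technical step is the classical fact that $\omega(\xx_0)$ is \emph{positively invariant} under the flow: if $\mathbf{y}\in\omega(\xx_0)$, then the trajectory starting from $\mathbf{y}$ is entirely contained in $\omega(\xx_0)$. This uses continuous dependence of solutions on initial conditions together with a diagonal/subsequence extraction applied to $\xx(t_k+s)$.

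With invariance in hand, the conclusion is quick. Take any $\mathbf{y}\in\omega(\xx_0)$ and let $\mathbf{y}(s)$ denote the trajectory through $\mathbf{y}$. By invariance $\mathbf{y}(s)\in\omega(\xx_0)$ for all $s\geq 0$, so $V(\mathbf{y}(s))\equiv c$. Differentiating yields $L^1_{\fb}V(\mathbf{y}(s))\equiv 0$, meaning $\mathbf{y}(s)$ is a trajectory lying entirely in $\mathcal M$. By hypothesis the only such trajectory is the trivial one $\xx(t)\equiv\mathbf{0}$, so $\mathbf{y}=\mathbf{0}$. Therefore $\omega(\xx_0)=\{\mathbf{0}\}$, which combined with boundedness of the trajectory gives $\xx(t)\to\mathbf{0}$, establishing asymptotic stability.

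I expect the main obstacle to be the rigorous justification of the positive invariance of $\omega(\xx_0)$, since it is the one place where one must carefully use continuous dependence on initial data rather than purely algebraic manipulations; the rest of the argument is essentially monotone-convergence bookkeeping plus the assumption on $\mathcal M$. Everything else (Lyapunov stability, monotonicity of $V(\xx(t))$, compactness of the trajectory closure) follows directly from the hypotheses and from Theorem~\ref{Lya}.
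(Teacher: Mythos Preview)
The paper does not actually prove Theorem~\ref{bkls}; it is quoted there as a classical result (the Barbashin--Krasovskii--LaSalle Principle) from the cited textbooks \cite{Haddad-Che,Khalil,Li-Slotine}, and is used only as a black box in the proof of Theorem~\ref{LyaSta}. Your outline is precisely the standard LaSalle $\omega$-limit set argument and is correct as written; the one point you flag (positive invariance of $\omega(\xx_0)$ via continuous dependence on initial data) is indeed the only nontrivial step, and it is handled in any of the references above.
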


Inspired by Theorem \ref{bkls}, we will define \emph{relaxed
Lyapunov function} (RLF for short) in the subsequent section, which
guarantees the asymptotic stability of an equilibrium of a dynamical
system.

\section{Relaxed Lyapunov Function}\label{sec:RLF}
Intuitively, a Lyapunov function requires that any trajectory
starting from $\xx_0\in U$ cannot leave the region $\{\xx\in \mathbb
R^n\mid V(\xx)\leq V(\xx_0)\}$. While, in the asymptotic stability
case, the corresponding $V$ forces any trajectory starting from
$\xx_0\in U$ to transect the boundary $\{\xx\in \mathbb R^n\mid
V(\xx)= V(\xx_0)\}$ towards the set $\{\xx\in \mathbb R^n\mid
V(\xx)< V(\xx_0)\}$. It is clear that   $L_{\fb}^1 V(\xx)<0$ is only
a sufficient condition to guarantee  asymptotic stability.
  When a point $\xx$ satisfies $L_{\fb}^1 V(\xx)=0$,
the transection requirement may still be met if the first non-zero
higher order Lie derivative of $V$ at $\xx$ is negative. To
formalize this idea, we give the following definition.

\begin{definition}[Pointwise Rank]\label{dfn:point-rank} \
Let $\mathbb{N}^+$ be the set of positive natural numbers.
Given sufficiently smooth function $\sigma$ and vector filed $\fb$, the {\itshape pointwise  rank} of $\sigma$ w.r.t. $\fb$ is defined as the function $\gamma_{\sigma,\fb}: \mathbb R^n \rightarrow \mathbb N\cup\{\infty\}$ given by
\begin{equation*}
  \gamma_{\sigma,\fb}(\xx)= \left\{
  \begin{array}{l}
    \infty , \qquad \mbox{if } \forall k\in \mathbb{N}^+.\, L^k_{\fb}\sigma(\xx)= 0, \\
    \min\{k\in \mathbb{N}^+\mid L^k_{\fb}\sigma(\xx)\neq 0\} , \quad \mbox{otherwise.}
  \end{array} \right.
\end{equation*}
\end{definition}
\begin{example}\label{eg:point-rank}
For $\fb=(-x,y)$ and $p(x,y)=x+y^2$, by Example \ref{eg:Lie-derv}, we have $\gamma_{p,\fb}(0,0)=\infty$, $\gamma_{p,\fb}(1,1)=1$, $\gamma_{p,\fb}(2,1)=2$.
\end{example}

\begin{definition}[Transverse Set]\label{dfn:transet}\
Given sufficiently smooth function $\sigma$ and vector field $\fb$, the
\emph{transverse set} of $\sigma$ w.r.t $\fb$ is defined as
\begin{equation*}
  \mathrm{Trans}_{\sigma,\fb}\,\define \, \{\xx\in \mathbb R^n \mid  \gamma_{\sigma,\fb}(\xx)<\infty \wedge  L^{\gamma_{\sigma,\fb}(\xx)}_{\fb} \sigma(\xx)<0\} \enspace .
\end{equation*}
\end{definition}

Intuitively, $\mathrm{Trans}_{\sigma,\fb}$ consists of those points
at which the first non-zero high order Lie derivative of $\sigma$
along $\fb$ is negative. Now we can relax condition $(c\textrm{'})$
in Theorem \ref{Lya} and get a stronger result for asymptotic
stability.

\begin{theorem}\label{LyaSta}\
Suppose $ \mathbf{0} $ is an equilibrium point of
(\ref{eq:ode}). If there is an open set $ U\subset \mathbb R^n$ with
 $\mathbf{0}\in U$ and a sufficiently smooth function $V: U\rightarrow \mathbb R$ s.t.
\begin{enumerate}
\item[(a)] $V(\mathbf{0})=0$,

\item[(b)] $V(\xx)>0$ for all $\xx\in U \backslash \{\mathbf{0}\}$  and

\item[(c)] $\xx \in \mathrm{Trans}_{V,\fb}$  for all $\xx\in U \backslash \{\mathbf{0}\}$,
\end{enumerate}
then $\mathbf{0}$ is an asymptotically stable equilibrium
point of (\ref{eq:ode}).
\end{theorem}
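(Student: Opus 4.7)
The plan is to deduce this strengthened asymptotic stability criterion from the Barbashin--Krasovskii--LaSalle (BKLS) Principle (Theorem \ref{bkls}). So I first need to verify the hypotheses of Theorem \ref{bkls}, namely (a), (b), and the weak form (c) of Theorem \ref{Lya}, and then verify the BKLS condition that the set $\mathcal M = \{\xx \in \mathbb R^n \mid L^1_{\fb} V(\xx) = 0\}\cap U$ contains no trajectory of the system besides the trivial one.

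Condition (a) and (b) are assumed directly, so the first substantive step is to show $L^1_{\fb} V(\xx) \le 0$ on $U$. For $\xx \in U\setminus\{\mathbf{0}\}$, suppose toward contradiction that $L^1_{\fb} V(\xx) > 0$. Then by Definition \ref{dfn:point-rank} we would have $\gamma_{V,\fb}(\xx) = 1$, and the first non-zero Lie derivative at $\xx$ would be positive, contradicting $\xx \in \mathrm{Trans}_{V,\fb}$. At $\xx = \mathbf{0}$, since $\mathbf{0}$ is an equilibrium we have $L^1_{\fb} V(\mathbf{0}) = (\nabla V(\mathbf{0}), \fb(\mathbf{0})) = 0$. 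Thus condition (c) of Theorem \ref{Lya} holds throughout $U$.

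For the BKLS hypothesis, suppose $\xx(t)$ is a trajectory entirely contained in $\mathcal M$ with $\xx(t_0) = \xx_0 \neq \mathbf{0}$ for some $t_0$. Since $L^1_{\fb} V(\xx(t)) \equiv 0$ on an interval around $t_0$, I can differentiate repeatedly with respect to $t$, using the chain rule identity
\begin{equation*}
\frac{\ud}{\ud t} L^{k}_{\fb} V(\xx(t)) = \left(\frac{\partial}{\partial \xx} L^{k}_{\fb} V(\xx(t)), \fb(\xx(t))\right) = L^{k+1}_{\fb} V(\xx(t)),
\end{equation*}
to conclude inductively that $L^{k}_{\fb} V(\xx(t)) \equiv 0$ for every $k \in \mathbb{N}^+$. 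Evaluating at $t_0$ gives $\gamma_{V,\fb}(\xx_0) = \infty$, which contradicts $\xx_0 \in \mathrm{Trans}_{V,\fb}$. Hence $\mathcal M$ contains only the trivial trajectory, and applying Theorem \ref{bkls} delivers asymptotic stability of $\mathbf{0}$.

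The only delicate step is the trajectory argument in the last paragraph; the rest is a direct unpacking of the definitions. In particular one must be sure that all higher-order Lie derivatives are well-defined along the trajectory and that the chain-rule iteration is legal, which is guaranteed by the ``sufficiently smooth'' assumption on $V$ (for the PDS case of interest it is automatic since Lie derivatives of polynomials are polynomials).
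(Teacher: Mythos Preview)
Your proof is correct and follows essentially the same route as the paper: verify that condition (c) forces $L^1_{\fb}V\le 0$ on $U$, then invoke the BKLS Principle (Theorem~\ref{bkls}) and rule out any nontrivial trajectory in $\mathcal M$. The only cosmetic difference is in the final contradiction: the paper Taylor-expands $L^1_{\fb}V(\xx(t))$ at $t=0$ to show it would become strictly negative for small $t>0$, whereas you differentiate the identity $L^1_{\fb}V(\xx(t))\equiv 0$ repeatedly to force $\gamma_{V,\fb}(\xx_0)=\infty$; these are two sides of the same elementary observation.
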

\begin{proof}
First notice that condition (c) implies
$L^1_{\fb} V(\xx)\leq 0$ for all $\xx\in U\backslash\{\mathbf{0}\}$.
In order to show the asymptotic stability of $ \mathbf{0}$, according to Theorem \ref{bkls},
it is sufficient to show that $\mathcal M\,\define \, \{\xx \in \mathbb R^n\mid
L^1_{\fb} V(\xx)=0\}\cap U$ contains no
nontrivial trajectory of the dynamical system.

If not, let
$\xx(t)$, $t\geq 0$ be such a trajectory
contained in $\mathcal M$ other than  $\xx(t)\equiv  \mathbf{0}$. Then $L^1_{\fb}V(\xx(t))=0$ for all $t\geq 0$.  Noting that $\xx_0=\xx(0)\in \mathrm{Trans}_{V,\fb}$, we can get
the Taylor expansion of $L^1_{\fb}V(\xx(t))$ at $t=0$:
\begin{align*}
L^1_{\fb}V(\xx(t)) &= L_{\fb}^1 V(\xx_0) + L_{\fb}^2 V(\xx_0)\cdot t +
L_{\fb}^3 V(\xx_0)\cdot \frac{t^2}{2!} +\cdots \\
&= L_{\fb}^{\gamma_{V,\fb}(\xx_0)} V(\xx_0)\cdot
\frac{t^{\scriptscriptstyle{\gamma_{V,\fb}(\xx_0)}-1}}{(\gamma_{V,\fb}(\xx_0)-1)!}+\cdots\enspace .
\end{align*}
By Definition \ref{dfn:transet}, there exists an $\epsilon>0$ s.t. $\forall t \in (0,\epsilon). \,L^1_{\fb}p(\xx(t))<0,$
which contradicts the assumption.
\end{proof}

\begin{definition}[Relaxed Lyapunov Function] \
We refer to the function $V$ in Theorem \ref{LyaSta}
as a {\itshape relaxed Lyapunov function}, denoted by
RLF.
\end{definition}

In the next section, we will explore how to discover polynomial RLFs automatically for PDSs.

\section{Automatically Discovering RLFs for PDSs}\label{sec:main-result}
Given a PDS, the process of automatically  discovering polynomial RLFs is  as follows:
\begin{itemize}
  \item a template, i.e. a parametric polynomial $p(\uu,\xx)$, is predefined as a potential   RLF;
  \item the conditions for $p(\uu,\xx)$ to be an RLF are translated into an equivalent formula $\Phi$
  of
   the decidable \emph{first-order theory of reals} \cite{tarski51};
  \item   constraint $\Phi '$ on parameters $\uu$, or equivalently a set $S_{\uu}$ of all $t$-tuples subject to $\Phi'$, is obtained by applying  \emph{quantifier elimination} (QE for short. See \cite{Redlog,qepcad})  to $\Phi$, and any instantiation of $\uu$ by $\uu_0 \in S_{\uu}$ yields an RLF $p_{\uu_0}(\xx)$.
\end{itemize}

\subsection{Computation of Transverse Set}\label{sec:auxiliary}
Correct translation of the three conditions in Theorem \ref{LyaSta} is crucial to our method. In particular, we have to show that for any polynomial $p(\xx)$ and polynomial vector field $\fb$, the transverse set $\mathrm{Trans}_{p,\fb}$ can be represented by first order polynomial formulas. To this end, we first give several theorems by exploring the properties of Lie derivatives and polynomial ideas.

In what follows, given a parameterized polynomial $p(\uu,\xx)$, all Lie derivatives $L_{\fb}^k p$ are seen as polynomials in $\mathbb{R}[\uu,\xx]$. Besides, we will use the convention that $\bigvee_{i\in \emptyset}\eta_i =\textit{false}$ and $\bigwedge_{i\in \emptyset}\eta_i =\textit{true}$, where $\eta_i$ is logical formula.
\begin{theorem}[Fixed Point Theorem]\label{GIT}\
Given $p\,\define\,p(\uu,\xx)$, if $L^{i}_{\fb}p\in \<L^{1}_{\fb}p,$  $ \cdots, L^{i-1}_{\fb}p\rg$,
then for all $m>i$,
$L^{m}_{\fb}p\in \<L^{1}_{\fb}p, \cdots,
L^{i-1}_{\fb}p\rg.$
\end{theorem}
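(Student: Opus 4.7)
The plan is to prove this by induction on $m$, using the Leibniz-type product rule for the Lie derivative. The base case $m = i$ is exactly the hypothesis, so the real content is in the inductive step.

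For the inductive step, suppose $L^m_{\fb} p \in \langle L^1_{\fb} p, \ldots, L^{i-1}_{\fb} p \rangle$ for some $m \geq i$, so that we can write
\begin{equation*}
L^m_{\fb} p \;=\; \sum_{j=1}^{i-1} h_j \cdot L^j_{\fb} p
\end{equation*}
for some $h_j \in \mathbb{R}[\uu,\xx]$. Applying $L_{\fb}$ to both sides and invoking the product rule $L_{\fb}(h \cdot g) = L_{\fb} h \cdot g + h \cdot L_{\fb} g$ (which is a direct consequence of the ordinary product rule applied inside the inner product definition of $L_{\fb}$) yields
\begin{equation*}
L^{m+1}_{\fb} p \;=\; \sum_{j=1}^{i-1} \bigl(L_{\fb} h_j\bigr)\cdot L^j_{\fb} p \;+\; \sum_{j=1}^{i-1} h_j \cdot L^{j+1}_{\fb} p.
\end{equation*}

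Every term in the first sum lies in $\langle L^1_{\fb} p, \ldots, L^{i-1}_{\fb} p \rangle$ by definition of an ideal. In the second sum, the terms with index $j+1 \leq i-1$ also lie in this ideal for the same reason; the only term that falls outside the generating set is $h_{i-1} \cdot L^i_{\fb} p$. At this point I invoke the hypothesis of the theorem: $L^i_{\fb} p \in \langle L^1_{\fb} p, \ldots, L^{i-1}_{\fb} p \rangle$, and multiplication by $h_{i-1}$ keeps this membership by the ideal closure condition (c) in the definition of an ideal. Hence $L^{m+1}_{\fb} p$ is a sum of elements of $\langle L^1_{\fb} p, \ldots, L^{i-1}_{\fb} p \rangle$, completing the induction.

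There is no serious obstacle; the only subtlety is noticing that after one application of $L_{\fb}$, precisely one term, namely $h_{i-1} \cdot L^i_{\fb} p$, pushes the index one past the generating set, and that this is exactly what the hypothesis is designed to absorb. The key facts used are the Leibniz rule for Lie derivatives and the defining closure properties of a polynomial ideal from the earlier definition; no deeper machinery such as Hilbert's Basis Theorem is needed for this particular statement.
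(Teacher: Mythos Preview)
Your proof is correct and follows essentially the same approach as the paper: induction on the order of the Lie derivative, an application of the Leibniz rule to the representation $L^m_{\fb}p=\sum_{j=1}^{i-1} h_j\,L^j_{\fb}p$, and absorption of the single overshooting term $h_{i-1}\,L^i_{\fb}p$ via the theorem's hypothesis. The paper's version merely unfolds the inner-product definition of $L_{\fb}$ explicitly rather than citing the product rule, but the argument is otherwise identical.
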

\begin{proof}
We prove this fact by induction. Assume
$L^{k}_{\fb}p\in \<L^{1}_{\fb}p,$  $ \cdots, L^{i-1}_{\fb}p\rg$ for
$k\geq i$.  Then there are $g_j\in \mathbb
R[\uu, \xx]$ s.t.
$L^{k}_{\fb}p=\sum_{j=1}^{i-1} g_jL^{j}_{\fb}p$.
By the definition of Lie derivative it follows that
\begin{align}
L^{k+1}_{\fb} p&~=~(\frac{\partial}{\partial \xx}L^k_{\fb}p, \fb) \nonumber\\
~&~=~(\frac{\partial}{\partial \xx}\sum_{j=1}^{i-1} g_jL^{j}_{\fb}p, \fb) \nonumber\\
~&~=~(\sum_{j=1}^{i-1}L^{j}_{\fb} p \frac{\partial}{\partial \xx}g_j+\sum_{j=1}^{i-1}g_j
 \frac{\partial}{\partial \xx}L^{j}_{\fb}p, \fb) \nonumber\\
%~&~=~\sum_{j=1}^{i-1}(L^{j}_{\fb} p \frac{\partial}{\partial \xx}g_j,\fb)+
% \sum_{j=1}^{i-1}(g_j \frac{\partial}{\partial \xx}L^{j}_{\fb}p, \fb)
% \nonumber \\
%~&~=~\sum_{j=1}^{i-1}(\frac{\partial}{\partial \xx}g_j, \fb)L^{j}_{\fb} p+\sum_{j=1}^{i-1}g_j
% (\frac{\partial}{\partial \xx}L^{j}_{\fb}p, \fb)\nonumber \\
~&~=~\sum_{j=1}^{i-1}(\frac{\partial}{\partial \xx}g_j, \fb)L^{j}_{\fb} p+\sum_{j=1}^{i-1}g_jL^{j+1}_{\fb}p\nonumber\\
%~&~=~\sum_{j=1}^{i-1}(\frac{\partial}{\partial \xx}g_j, \fb)L^{j}_{\fb} p+\sum_{j=2}^{i}g_{j-1}L^{j}_{\fb}p\nonumber\\
~&~=~\sum_{j=1}^{i-1}(\frac{\partial}{\partial \xx}g_j, \fb)L^{j}_{\fb}
p+\sum_{j= 2}^{i-1}g_{j-1}L^{j}_{\fb}p+g_{i-1}L^i_{\fb}p. \nonumber
\end{align}
By induction hypothesis, $L^i_{\fb}p\in \<L^{1}_{\fb}p,
\cdots, L^{i-1}_{\fb}p\rg$, so $L^{k+1}_{\fb}p\in\<L^{1}_{\fb}p,
\cdots, L^{i-1}_{\fb}p\rg$. By induction, the fact follows immediately.
\end{proof}

\begin{theorem}\label{thm:upper}\
Given $p\,\define\,p(\uu,\xx)$, the number
$$N_{p,\fb}=\min\{i\in\mathbb{N}^+\mid L^{i+1}_{\fb}p\in \<L^{1}_{\fb}p, \cdots,
L^{i}_{\fb}p\rg\}$$
is well defined and computable.
\end{theorem}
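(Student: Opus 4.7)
The plan is to establish well-definedness by invoking the Ascending Chain Condition (Theorem \ref{ACC}), and to establish computability by reducing the defining condition to ideal membership, which is decidable via Gröbner bases.

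For well-definedness, I would consider the sequence of ideals $I_j \,\define\, \<L^1_{\fb}p, L^2_{\fb}p, \ldots, L^j_{\fb}p\rg$ in $\mathbb{R}[\uu,\xx]$. By construction this is an ascending chain $I_1 \subseteq I_2 \subseteq \cdots$, since each $I_{j+1}$ is generated by the generators of $I_j$ together with one additional polynomial. Theorem \ref{ACC} then supplies an index $N$ with $I_m = I_N$ for all $m \geq N$; in particular $L^{N+1}_{\fb}p \in I_{N+1} = I_N = \<L^1_{\fb}p, \ldots, L^N_{\fb}p\rg$. Hence the set
\[
S \,\define\, \{i\in\mathbb{N}^+ \mid L^{i+1}_{\fb}p \in \<L^1_{\fb}p, \ldots, L^i_{\fb}p\rg\}
\]
is nonempty, so its minimum $N_{p,\fb}$ exists. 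I should note that Theorem \ref{ACC} is stated for $\mathbb{K}[\xx]$, but it applies equally in $\mathbb{R}[\uu,\xx]$, which is the relevant ring once parameters are treated as indeterminates.

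For computability, the procedure is to test $i = 1, 2, 3, \ldots$ in turn: at stage $i$, compute $L^{i+1}_{\fb}p$ (a polynomial obtained by straightforward differentiation and multiplication) and check whether it lies in $\<L^1_{\fb}p, \ldots, L^i_{\fb}p\rg$. Ideal membership in a polynomial ring over $\mathbb{R}$ is decidable: compute a Gröbner basis of $\<L^1_{\fb}p, \ldots, L^i_{\fb}p\rg$ and reduce $L^{i+1}_{\fb}p$ against it, returning membership iff the normal form is zero. Return $N_{p,\fb} = i$ at the first $i$ for which the test succeeds. Termination of this loop is guaranteed by the well-definedness argument above, since $N_{p,\fb} \in S$ is finite.

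The only potentially delicate point is convincing oneself that the construction genuinely stabilizes at the first witness rather than requiring some global search: this is immediate because $N_{p,\fb}$ is the minimum of $S$, and the enumeration checks indices in increasing order, so the loop halts exactly when $i = N_{p,\fb}$. I do not anticipate a real obstacle here; the argument is a clean pairing of Hilbert's basis theorem (through its ACC corollary) with the effective theory of Gröbner bases.
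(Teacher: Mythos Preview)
Your proposal is correct and follows essentially the same approach as the paper: form the ascending chain $I_i=\langle L^1_{\fb}p,\ldots,L^i_{\fb}p\rangle$ in $\mathbb{R}[\uu,\xx]$, invoke the Ascending Chain Condition (Theorem~\ref{ACC}) to get nonemptiness of the defining set, and reduce computability to ideal membership via Gr\"obner bases. The paper phrases well-definedness via the equivalent characterization $N_{p,\fb}=\min\{i\in\mathbb{N}^+\mid I_{i+1}=I_i\}$, but since $I_{i+1}=I_i$ iff $L^{i+1}_{\fb}p\in I_i$, this is exactly your set $S$.
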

\begin{proof}
First it is easy to show that $N_{p,\fb}$ has an equivalent expression
$\label{eq:npf}N_{p,\fb}=\min\{i\in\mathbb{N}^+\mid I_{i+1}=I_i\}$,
where $I_i=\<L^{1}_{\fb}p, \cdots,
L^{i}_{\fb}p\rg\subseteq \mathbb R[\uu,\xx]$.
Notice that
$$I_1\subseteq\\I_2\subseteq\cdots\subseteq I_k\cdots $$
forms an ascending chain of ideals. By Theorem \ref{ACC},
$N_{p,\fb}$ is well-defined. Computation of $N_{p,\fb}$ is actually an \emph{ideal membership} problem, which can be solved by computation of \emph{Gr\"{o}bner basis} \cite{clo}.
\end{proof}
\begin{example}
For $\fb=(-x,y)$ and $p(x,y)=x+y^2$, by Example \ref{eg:Lie-derv}, we have $L^2_{\fb}p\notin \<L^1_{\fb}p\rg$ and $L^3_{\fb}p\in \<L^1_{\fb}p,L^2_{\fb}p\rg$, so $N_{p,\fb}=2$\,.
\end{example}

\begin{theorem}[Rank Theorem]\label{Para-Dir-Rank} \ \
Suppose that $p\,\define\, p(\uu,\xx)$. Then  for all $\xx\in \mathbb
R^n$ and all $\uu_0\in \mathbb{R}^t$, $\gamma_{p_{\uu_0},\fb}(\xx)<\infty$ {implies} $\gamma_{p_{\uu_0},\fb}(\xx)\leq N_{p,\fb}.$
\end{theorem}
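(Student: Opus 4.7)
The plan is to use the Fixed Point Theorem (Theorem \ref{GIT}) to extend the ideal membership beyond $N_{p,\fb}$, then specialize to the parameter value $\uu_0$ and argue by contradiction at the fixed point $\xx$.

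First I would record the consequence of Theorem \ref{GIT} that really matters here: by the definition of $N := N_{p,\fb}$, we have $L^{N+1}_{\fb}p \in \<L^{1}_{\fb}p,\ldots,L^{N}_{\fb}p\rg$ in $\mathbb{R}[\uu,\xx]$, and applying Theorem \ref{GIT} with $i = N+1$ yields that for \emph{every} integer $m > N$ there exist polynomials $h_{m,1},\ldots,h_{m,N} \in \mathbb{R}[\uu,\xx]$ with
\begin{equation*}
L^{m}_{\fb}p(\uu,\xx) \;=\; \sum_{j=1}^{N} h_{m,j}(\uu,\xx)\,L^{j}_{\fb}p(\uu,\xx).
\end{equation*}

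Next I would specialize. Because substituting $\uu_0$ commutes with taking Lie derivatives along $\fb$ (the $\uu$ are inert constants with respect to $\partial/\partial\xx$), the displayed identity in $\mathbb{R}[\uu,\xx]$ yields, for every $\xx \in \mathbb{R}^n$,
\begin{equation*}
L^{m}_{\fb}p_{\uu_0}(\xx) \;=\; \sum_{j=1}^{N} h_{m,j}(\uu_0,\xx)\,L^{j}_{\fb}p_{\uu_0}(\xx) \qquad (m > N).
\end{equation*}

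Now I would conclude by contradiction. Fix $\xx \in \mathbb{R}^n$ with $k := \gamma_{p_{\uu_0},\fb}(\xx) < \infty$, and suppose toward a contradiction that $k > N$. By Definition \ref{dfn:point-rank}, $L^{j}_{\fb}p_{\uu_0}(\xx) = 0$ for every $1 \le j < k$, and in particular for every $1 \le j \le N$. Applying the specialized identity with $m = k > N$ gives $L^{k}_{\fb}p_{\uu_0}(\xx) = 0$, contradicting the definition of $k$ as the least positive integer with $L^{k}_{\fb}p_{\uu_0}(\xx) \neq 0$. Hence $k \le N$, as required.

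The only genuinely delicate point is justifying the commutativity between specialization at $\uu_0$ and iterated Lie differentiation, i.e.\ that the ideal membership in the \emph{parametric} ring $\mathbb{R}[\uu,\xx]$ actually transfers to a pointwise equality after substitution. This is what makes the theorem non-trivial — $N_{p,\fb}$ is computed once, symbolically, yet controls the pointwise rank for every instantiation. The argument is immediate here because $\uu$ appears only polynomially and $\fb$ depends solely on $\xx$, so $L_{\fb}$ and the evaluation homomorphism $\uu \mapsto \uu_0$ commute, but it is the conceptual bridge that must be stated explicitly.
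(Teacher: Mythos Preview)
Your proof is correct and follows essentially the same route as the paper's: both argue by contradiction, invoke Theorem~\ref{GIT} together with the definition of $N_{p,\fb}$ to force all Lie derivatives of order $>N_{p,\fb}$ to vanish once the first $N_{p,\fb}$ do, and thereby contradict the minimality of $\gamma_{p_{\uu_0},\fb}(\xx)$. If anything, your version is more explicit about the specialization step (that the parametric ideal membership in $\mathbb{R}[\uu,\xx]$ survives substitution $\uu\mapsto\uu_0$), which the paper leaves implicit.
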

\begin{proof}
If the conclusion is not true, then there exist $\xx_0\in\mathbb R^n$ and $\uu_0\in\mathbb R^t$ s.t.
$$N_{p,\fb}<\gamma_{p_{\uu_0},\fb}(\xx_0)<\infty \enspace .$$
By Definition \ref{dfn:point-rank}, $\xx_0$ satisfies
\begin{equation*}
L_{\fb}^1 p_{\uu_0}=0\wedge \cdots\wedge L_{\fb}^{N_{p,\fb}}p_{\uu_0}=0
\wedge L_{\fb}^{\gamma_{p_{\uu_0},\fb}(\xx_0)}p_{\uu_0}\neq 0\enspace .
\end{equation*}
Then by Theorem \ref{thm:upper} and \ref{GIT},
for all $m>N_{p,\fb}$, we have $L_{\fb}^m p_{\uu_0}(\xx_0)=0$. In particular, $L_{\fb}^{\gamma_{p_{\uu_0},\fb}(\xx_0)}p_{\uu_0}(\xx_0)=0$, which contradicts $L_{\fb}^{\gamma_{p_{\uu_0},\fb}(\xx_0)}p_{\uu_0}(\xx_0)\neq 0$.
\end{proof}

Now we are able to show the computability of $\mathrm{Trans}_{p,\fb}$.
\begin{theorem}\label{thm:comput-trans}\
Given a parameterized polynomial $p\,\define\,p(\uu,\xx)$ and polynomial vector field $\fb$, for any $\uu_0\in \mathbb{R}^t$ and any $\xx\in\mathbb{R}^n$, $\xx\in \mathrm{Trans}_{p_{\uu_0},\fb}$ if and only if $\uu_0$ and $\xx$ satisfy $\varphi_{p,\fb}$, where
\begin{equation}\label{eq:varphi}
\varphi_{p,\fb}\,\define \,
\bigvee_{i=1}^{N_{p,\fb}}\varphi_{p,\fb}^{i}\,,\quad \mbox{and}
\end{equation}
\begin{equation}\label{eq:varphi-i}
\varphi_{p,\fb}^{i}\,\define \,  (\bigwedge_{j=1}^{i-1}L_{\fb}^j
p(\uu,\xx)=0)\wedge L_{\fb}^i p(\uu,\xx)<0\,.
\end{equation}
\end{theorem}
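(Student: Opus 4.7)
The plan is to prove the two directions of the biconditional separately, using Definition \ref{dfn:point-rank} and \ref{dfn:transet} together with the Rank Theorem (Theorem \ref{Para-Dir-Rank}) as the key tool that makes the finite disjunction in (\ref{eq:varphi}) sufficient.

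For the easier direction ($\Leftarrow$), I would fix $\uu_0 \in \mathbb{R}^t$ and $\xx \in \mathbb{R}^n$ satisfying $\varphi_{p,\fb}$. Then by (\ref{eq:varphi}) there exists some $i$ with $1 \leq i \leq N_{p,\fb}$ such that $\varphi_{p,\fb}^i$ holds, meaning $L_{\fb}^j p_{\uu_0}(\xx) = 0$ for all $1 \leq j \leq i-1$ and $L_{\fb}^i p_{\uu_0}(\xx) < 0$. In particular $L_{\fb}^i p_{\uu_0}(\xx) \neq 0$, so by Definition \ref{dfn:point-rank} we get $\gamma_{p_{\uu_0},\fb}(\xx) = i < \infty$, and then $L^{\gamma_{p_{\uu_0},\fb}(\xx)}_{\fb} p_{\uu_0}(\xx) = L_{\fb}^i p_{\uu_0}(\xx) < 0$, which by Definition \ref{dfn:transet} is exactly $\xx \in \mathrm{Trans}_{p_{\uu_0},\fb}$. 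The convention $\bigwedge_{j \in \emptyset} = \textit{true}$ takes care of the $i=1$ case cleanly.

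For the other direction ($\Rightarrow$), assume $\xx \in \mathrm{Trans}_{p_{\uu_0},\fb}$. By Definition \ref{dfn:transet}, $\gamma_{p_{\uu_0},\fb}(\xx)$ is finite; call it $i$. By Definition \ref{dfn:point-rank} this $i$ is the least positive integer with $L_{\fb}^i p_{\uu_0}(\xx) \neq 0$, and additionally $L_{\fb}^i p_{\uu_0}(\xx) < 0$. This immediately gives the conjuncts of $\varphi_{p,\fb}^i$. The crucial remaining step is to argue $i \leq N_{p,\fb}$, so that $\varphi_{p,\fb}^i$ actually appears as a disjunct in (\ref{eq:varphi}); this is precisely what the Rank Theorem (Theorem \ref{Para-Dir-Rank}) provides, since it states that finiteness of $\gamma_{p_{\uu_0},\fb}(\xx)$ forces $\gamma_{p_{\uu_0},\fb}(\xx) \leq N_{p,\fb}$. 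Hence $\varphi_{p,\fb}$ is satisfied.

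The only potential obstacle is the second direction: one might worry that $N_{p,\fb}$ (an ideal-theoretic quantity defined over $\mathbb{R}[\uu,\xx]$) fails to dominate the pointwise rank at a specific $(\uu_0,\xx)$. But this is exactly what Theorem \ref{Para-Dir-Rank} rules out, having been derived from the Fixed Point Theorem (Theorem \ref{GIT}) applied to the ascending chain of ideals generated by successive Lie derivatives. Since all the needed machinery is already in place, the proof reduces to the two short verifications above; no additional computation is required.
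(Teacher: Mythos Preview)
Your proof is correct and follows essentially the same approach as the paper: both directions are handled by unwinding Definitions \ref{dfn:point-rank} and \ref{dfn:transet}, with the Rank Theorem (Theorem \ref{Para-Dir-Rank}) supplying the bound $\gamma_{p_{\uu_0},\fb}(\xx)\leq N_{p,\fb}$ needed for the forward direction. Your write-up is in fact more explicit than the paper's, which dismisses the $(\Leftarrow)$ direction as trivial and leaves the verification that (\ref{eq:trans2}) implies (\ref{eq:varphi}) to the reader.
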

\begin{proof}
($\Rightarrow$) Suppose $\xx\in \mathrm{Trans}_{p_{\uu_0},\fb}$. By Definition \ref{dfn:transet} $\xx$ satisfies
\begin{equation}\label{eq:trans2}
L_{\fb}^1 p_{\uu_0}=0\wedge \cdots\wedge L_{\fb}^{\scriptscriptstyle{\gamma_{p_{\uu_0},\fb}(\xx)-1}}p_{\uu_0}=0
\wedge L_{\fb}^{\scriptscriptstyle{\gamma_{p_{\uu_0},\fb}(\xx)}}p_{\uu_0}<0\enspace .
\end{equation}
By Theorem \ref{Para-Dir-Rank}, $\gamma_{p_{\uu_0},\fb}(\xx)\leq N_{p,\fb}$. Then it is easy to check that (\ref{eq:trans2}) implies (\ref{eq:varphi}) when $\uu=\uu_0$.

($\Rightarrow$) If $\uu_0$ and $\xx$ satisfy $\varphi_{p,\fb}$, then from Definition \ref{dfn:transet} we can see that $\xx\in \mathrm{Trans}_{p_{\uu_0},\fb}$ holds trivially.
\end{proof}

\subsection{A Sound and Complete Method for Generating RLFs}
Based on the results established in Section \ref{sec:auxiliary}, we
can give a sound and complete method for automatically generating
polynomial RLFs on PDSs.

Given $\xx =(x_1,x_2,\ldots, x_n)\in \mathbb R^n$, let $\|\xx \|=
\sqrt{\sum_{i=1}^n x_i^2}$ denote the Euclidean norm of $\xx$. Let
$\mathcal B(\xx,d)=\{\mathbf{y}\in \mathbb R^n\mid
\|\mathbf{y}-\xx\|< d\}$ for any $d> 0$. Then  our main result can
be stated as follows.
\begin{theorem}[Main Result]\label{thm:main}
Given a PDS $\dot{\xx}=\fb(\xx)$  with $\fb(\mathbf{0})=\mathbf{0}$
and a parametric polynomial $p\,\define\,p(\uu,\xx)$. Let  $r_0\in
\mathbb R$ and $\uu_0=(u_{1_0},u_{2_0},\ldots,u_{t_0})\in \mathbb
R^t$. Then $p_{\uu_0}$ is an RLF in $\mathcal {B}(\mathbf{0},r_0)$
if and only if
\[(u_{1_0},u_{2_0},\ldots, u_{t_0},r_0) \in
\mathrm{QE}( \phi_{p,\fb})\enspace ,\] where
\begin{equation}\label{eqn:phi}
\phi_{p,\fb}\,\define \, \phi_{p,\fb}^{1}\wedge
\phi_{p,\fb}^{2} \wedge \phi_{p,\fb}^{3} \enspace ,
\end{equation}
\begin{equation}\label{eq:phi-1}
\phi_{p,\fb}^{1}\,\define \, p(\uu,\mathbf{0})=0\enspace ,
\end{equation}
\begin{equation}\label{eq:phi-2}
\phi_{p,\fb}^{2}\,\define \, \forall \xx.(\| \xx\|^2>0 \, \wedge
\,\| \xx\|^2 < r^2 \rightarrow p(\uu,\xx)>0)\,,
\end{equation}
\begin{equation}\label{eq:phi-3}
\phi_{p,\fb}^{3}\,\define \,\forall \xx.( \| \xx\|^2>0 \, \wedge
\,\| \xx\|^2 < r^2 \rightarrow \varphi_{p,\fb})\enspace .
\end{equation}
\end{theorem}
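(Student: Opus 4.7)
The plan is to read the right-hand side through the correctness of quantifier elimination over the real closed field: $\mathrm{QE}(\phi_{p,\fb})$ has the same free variables $(\uu,r)$ as $\phi_{p,\fb}$ and is equivalent to it over $\mathbb{R}$, so the biconditional reduces to showing $(\uu_0,r_0)\models \phi_{p,\fb}$ if and only if $p_{\uu_0}$ is an RLF in $\mathcal{B}(\mathbf{0},r_0)$. Unfolding the definition of RLF, this in turn reduces to verifying that the three conjuncts $\phi_{p,\fb}^1$, $\phi_{p,\fb}^2$, $\phi_{p,\fb}^3$ encode, respectively, conditions (a), (b), (c) of Theorem \ref{LyaSta} applied to $V=p_{\uu_0}$ on the open set $U=\mathcal{B}(\mathbf{0},r_0)$.

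The matching for (a) is immediate: $V(\mathbf{0})=0$ is literally $p_{\uu_0}(\mathbf{0})=0$, i.e.\ $\phi_{p,\fb}^1$ at $\uu=\uu_0$. The matching for (b) is a direct rewriting: positivity of $V$ on $U\setminus\{\mathbf{0}\}$ is exactly the sentence $\forall \xx.(0<\|\xx\|^2<r_0^2 \to p_{\uu_0}(\xx)>0)$, which is $\phi_{p,\fb}^2$ at $\uu=\uu_0$, $r=r_0$. The essential step is (c): I need to translate the semantic condition ``$\xx\in\mathrm{Trans}_{p_{\uu_0},\fb}$ for every $\xx\in U\setminus\{\mathbf{0}\}$'' into a first-order polynomial formula. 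This is precisely what Theorem \ref{thm:comput-trans} supplies, via the rank bound of Theorem \ref{Para-Dir-Rank}, yielding the finite disjunction $\varphi_{p,\fb}$; hence condition (c) rewrites to $\phi_{p,\fb}^3$ at $\uu=\uu_0$, $r=r_0$. Conjoining the three equivalences delivers the desired biconditional, after which the QE step is automatic.

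The main technical obstacle sits in the $(c)\leftrightarrow\phi_{p,\fb}^3$ step, but that work has already been done in the preceding subsection: without the Fixed Point Theorem (Theorem \ref{GIT}) and the Rank Theorem (Theorem \ref{Para-Dir-Rank}), the pointwise rank could a priori be arbitrarily large and the disjunction defining $\varphi_{p,\fb}$ would be infinite, placing it outside the first-order theory of reals. Two further caveats worth a line in the final write-up: first, Theorem \ref{LyaSta} is a sufficient (not necessary) condition for asymptotic stability, so the statement characterises ``$p_{\uu_0}$ is an RLF'' rather than ``the origin is asymptotically stable''; second, $\phi_{p,\fb}$ already has $\uu$ and $r$ as free variables while $\xx$ is the only quantified variable, so $\mathrm{QE}$ returns a quantifier-free formula in the same free variables and the membership ``$(u_{1_0},\dots,u_{t_0},r_0)\in \mathrm{QE}(\phi_{p,\fb})$'' is well-typed.
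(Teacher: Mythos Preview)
Your proposal is correct and follows essentially the same approach as the paper: reduce the QE membership to satisfaction of $\phi_{p,\fb}$, then match $\phi_{p,\fb}^1,\phi_{p,\fb}^2,\phi_{p,\fb}^3$ to conditions (a), (b), (c) of Theorem~\ref{LyaSta} with $U=\mathcal{B}(\mathbf{0},r_0)$, invoking Theorem~\ref{thm:comput-trans} for the only nontrivial step (c). Your write-up is in fact more explicit than the paper's own proof, which merely remarks that the three formulas are ``direct translations'' of (a)--(c) once Theorem~\ref{thm:comput-trans} is in hand and that working with balls loses no generality.
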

\begin{proof}
First, in Theorem \ref{LyaSta}, the existence of an open set $U$ is
equivalent to the existence of an open set $\mathcal
{B}(\mathbf{0},r_0)$. Then according to Theorem
\ref{thm:comput-trans}, it is easy to check that (\ref{eq:phi-1}),
(\ref{eq:phi-2})  and (\ref{eq:phi-3}) are direct translations of
conditions $(a)$, $(b)$ and $(c)$ in Theorem \ref{LyaSta}.
\end{proof}

According to Theorem \ref{thm:main}, we can follow the three steps at the beginning of Section \ref{sec:main-result} to  discover polynomial RLFs on PDSs. This method is ``complete" because we can discover all possible polynomial RLFs by enumerating all polynomial templates.

\subsection{Implementation}
To construct $\phi_{p,\fb}$ in Theorem \ref{thm:main}, we need to compute $N_{p,\fb}$ in advance,  which is time-consuming. What is worse, when $N_{p,\fb}$ is a large number the resulting $\phi_{p,\fb}$ can be a huge formula, for which QE is difficult.
For analysis of asymptotic stability,
one RLF is enough. Therefore if an RLF can be obtained by solving constraint involving merely lower order Lie
derivatives, there's no need to resort to higher order ones. Regarding this, we give an incomplete but more efficient implementation of Theorem \ref{thm:main}, by constructing $\phi_{p,\fb}$ and searching for RLFs in a stepwise manner.

Let
$$
\psi_{p,\fb}^{i}\,\define \, \bigwedge_{j=1}^{i-1} L_{\fb}^j
p(\uu,\xx)=0,\, \mbox{for}\, i\geq 1\enspace ,
$$
\begin{eqnarray*}
\theta_{p,\fb}^{i}\,\define \, \forall \xx.(\|
\xx\|^2>0 \,
\wedge \,\| \xx\|^2 < r^2 \wedge \psi_{p,\fb}^{i}
\rightarrow L_{\fb}^i p(\uu,\xx)<0)
\end{eqnarray*}
and
\begin{eqnarray*}
\bar\theta_{p,\fb}^{i}\,\define \, \forall \xx.(\|
\xx\|^2>0 \,
\wedge \,\| \xx\|^2 < r^2 \wedge \psi_{p,\fb}^{i}
\rightarrow L_{\fb}^i p(\uu,\xx)\leq0)\,.
\end{eqnarray*}

Intuitively, for $\xx$ satisfying $\psi_{p,\fb}^{i}$, we have to
impose constraints $\theta_{p,\fb}^{i}$ or
$\bar\theta_{p,\fb}^{i}$ on the $i$-th higher order Lie derivative
of $p$ along $\fb$.
Now the  RLF generation algorithm (RLFG for short) can be formally stated as
follows.
\begin{algorithm}[ht]\label{alg:lfg}
\caption{Relaxed Lyapunov Function Generation}
Input: $\fb \in \mathbb{R}[x_1,\ldots,x_n]^n$
with $\fb(\mathbf{0})=\mathbf{0}$, \\$p\in \mathbb{R}[u_1,\ldots,u_t, x_1,\ldots,x_n] $\\
Output: $Res\subseteq \mathbb{R}^{t+1}$ \\
$i:=1$; $temp:=\emptyset$; $L_{\fb}^1 p:=(\frac{\partial}{\partial \xx} p,\fb)$;\\
$Res^{0}:=\mathrm{QE}(\phi_{p,\fb}^{1} \wedge \phi_{p,\fb}^{2} )$;\\
\If{$Res^{0}=\emptyset$}{return $\emptyset$;}\Else{
\textbf{repeat}\\
\qquad $temp:= Res^{i-1}\cap \mathrm{QE}(\theta_{p,\fb}^{i})$;\\
\qquad \If{$temp \neq \emptyset\,$}{\qquad return $temp\,$;}\qquad
\Else{\qquad $Res^{i}:=Res^{i-1}\cap \mathrm{QE}
(\bar\theta_{p,\fb}^{i})$;\\
\qquad\If{$Res^{i}=\emptyset$}{\qquad return
$\emptyset$;} \qquad \Else{\qquad $i:=i+1$;\\
\qquad  $L_{\fb}^i p:=(\frac{\partial}{\partial \xx} L_{\fb}^{i-1}
p,\fb)$;}}\textbf{until}\,\,\,$L_{\fb}^i
p\in \langle L_{\fb}^1 p, L_{\fb}^2 p, \ldots,
L_{\fb}^{i-1}p\rangle$;}
return $\emptyset$;
\end{algorithm}

\begin{remark} Formula $\phi_{p,\fb}^{1}$ and $\phi_{p,\fb}^{2}$ in
line 5 are defined in (\ref{eq:phi-1}) and (\ref{eq:phi-2}); QE in line 5, 10
and 14 is done in a computer algebra tool like  Redlog
\cite{Redlog} or QEPCAD \cite{qepcad}; in line 20
the loop test can  be done by calling the \emph{IdealMembership} command in Maple$^{\scriptscriptstyle{\textrm{TM}}}$ \cite{Maple}.
\end{remark}

The idea of Algorithm \ref{alg:lfg} is: at the $i$-th step, we
search for an RLF using constraint constructed from Lie derivatives
with order no larger than $i$. If this fails to produce a solution,
then we add the $(i+1)$-th  order Lie derivative to the constraint.
This process continues until either we succeed in finding a
solution,  or we can conclude that there is no RLF with the
predefined template, or we get to the $N_{p,\fb}$-th iteration,
which means no solution exists at all.

Correctness of the
algorithm  RLFG is guaranteed by the following theorem.

\begin{theorem}\label{thm:crrct}
For Algorithm \ref{alg:lfg}, we have
\begin{description}
\item[1) Termination.]\qquad\qquad RLFG terminates for any valid input.
\item[2) Soundness.] \qquad\qquad If $(\uu,r) = \, (u_1,u_2,\ldots,u_t,r)
\in Res$, then $p_{\uu}(\xx)$ is an RLF in
$\mathcal{B}(\mathbf{0},r)$.
\item[3) Weak Completeness.]\qquad\qquad\qquad\qquad If $Res=\emptyset$ then there does
not exist an RLF in the form of $p(\uu,\xx)$.
\end{description}
\end{theorem}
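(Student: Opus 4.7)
The plan is to prove the three parts in order, leveraging the results already established in Sections~\ref{sec:foundations} and~\ref{sec:main-result}.

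For \textbf{Termination}, I would observe that the only way the algorithm can fail to return inside the loop is to reach the \textbf{until}-condition $L_{\fb}^i p\in\langle L_{\fb}^1 p,\ldots,L_{\fb}^{i-1}p\rangle$. Each loop pass strictly increments $i$, so the ideals $I_i=\langle L_{\fb}^1 p,\ldots,L_{\fb}^i p\rangle$ form an ascending chain in $\mathbb R[\uu,\xx]$. By the Ascending Chain Condition (Theorem~\ref{ACC}) and the definition of $N_{p,\fb}$ in Theorem~\ref{thm:upper}, the condition must trigger no later than $i=N_{p,\fb}+1$. The other two exits ($Res^0=\emptyset$ or $Res^i=\emptyset$) are immediate, so the algorithm halts in finitely many steps.

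For \textbf{Soundness}, suppose the algorithm returns $temp$ at the $k$-th iteration. Then $(\uu,r)\in Res^{k-1}\cap\mathrm{QE}(\theta_{p,\fb}^{k})$, which unwinds to $(\uu,r)$ simultaneously satisfying $\phi_{p,\fb}^{1}$, $\phi_{p,\fb}^{2}$, $\bar\theta_{p,\fb}^{1},\ldots,\bar\theta_{p,\fb}^{k-1}$, and $\theta_{p,\fb}^{k}$. The first two immediately give conditions (a) and (b) of Theorem~\ref{LyaSta}. For condition (c), I would fix an arbitrary $\xx\neq\mathbf 0$ in $\mathcal B(\mathbf 0,r)$ and let $j^*$ be the least index with $L_{\fb}^{j^*}p_{\uu}(\xx)\neq 0$. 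The conjunction of $\bar\theta_{p,\fb}^{1},\ldots,\bar\theta_{p,\fb}^{k-1},\theta_{p,\fb}^{k}$ forbids $j^*>k$ (since $\theta_{p,\fb}^{k}$ forces $L_{\fb}^{k}p_{\uu}(\xx)<0$ whenever $\psi_{p,\fb}^{k}$ holds at $\xx$), and forces $L_{\fb}^{j^*}p_{\uu}(\xx)<0$ in the remaining cases $j^*<k$ (using $\bar\theta_{p,\fb}^{j^*}$ together with $L_{\fb}^{j^*}p_{\uu}(\xx)\neq 0$) and $j^*=k$. Hence $\xx\in\mathrm{Trans}_{p_{\uu},\fb}$, and Theorem~\ref{LyaSta} certifies $p_{\uu}$ as an RLF in $\mathcal B(\mathbf 0,r)$.

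\textbf{Weak Completeness} is the delicate part, and I would prove it by contrapositive: assume an RLF $p_{\uu_0}$ in some $\mathcal B(\mathbf 0,r_0)$ exists and show the algorithm does not return $\emptyset$. First, $(\uu_0,r_0)\in Res^0$ because $p_{\uu_0}$ satisfies (a),(b) of Theorem~\ref{LyaSta}. Next, $(\uu_0,r_0)\in\mathrm{QE}(\bar\theta_{p,\fb}^{j})$ for every $j$: if $\xx\neq\mathbf 0$ in the ball satisfies $\psi_{p,\fb}^{j}$, then since $\xx\in\mathrm{Trans}_{p_{\uu_0},\fb}$ the first nonzero Lie derivative at $\xx$ has some order $\geq j$ and is negative, so $L_{\fb}^{j}p_{\uu_0}(\xx)\leq 0$. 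Therefore $(\uu_0,r_0)$ survives in every $Res^i$ that the algorithm actually computes; the failure branches $Res^0=\emptyset$ and $Res^i=\emptyset$ are thus ruled out. The genuine obstacle is the fixed-point exit: I must show that at some iteration $k\leq N_{p,\fb}$ the set $temp=Res^{k-1}\cap\mathrm{QE}(\theta_{p,\fb}^{k})$ is nonempty. For this, define $S_k=\{\xx\neq\mathbf 0:\|\xx\|<r_0,\;L_{\fb}^1 p_{\uu_0}(\xx)=\cdots=L_{\fb}^{k-1}p_{\uu_0}(\xx)=0\}$. If $\theta_{p,\fb}^{k}$ failed at $(\uu_0,r_0)$ for every $k\in\{1,\ldots,N_{p,\fb}\}$, then each $S_k$ would contain a point where $L_{\fb}^{k}p_{\uu_0}$ vanishes, giving $S_{N_{p,\fb}+1}\neq\emptyset$. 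But by the Fixed Point Theorem~\ref{GIT} and Theorem~\ref{Para-Dir-Rank}, any $\xx\in S_{N_{p,\fb}+1}$ satisfies $L_{\fb}^{m}p_{\uu_0}(\xx)=0$ for every $m$, i.e.\ $\gamma_{p_{\uu_0},\fb}(\xx)=\infty$, contradicting $\xx\in\mathrm{Trans}_{p_{\uu_0},\fb}$. So some $\theta_{p,\fb}^{k}$ does hold at $(\uu_0,r_0)$ with $k\leq N_{p,\fb}$, the corresponding $temp$ is nonempty, and the algorithm returns a nonempty set before the fixed-point exit. This closes the proof; I expect the bookkeeping of which conjuncts are accumulated in $Res^{k-1}$ versus the new test $\theta_{p,\fb}^{k}$, together with the reduction of the fixed-point case to Theorems~\ref{GIT} and~\ref{Para-Dir-Rank}, to be the most error-prone step.
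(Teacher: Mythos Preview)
Your argument is correct, but it is organized differently from the paper's. The paper introduces four loop invariants (P1)--(P4) describing $Res^i$ as $\mathrm{QE}(\phi_{p,\fb}^{1}\wedge\phi_{p,\fb}^{2}\wedge\bar\phi_{p,\fb}^{i})$ for suitable auxiliary formulas $\bar\phi_{p,\fb}^{i}$ and $\tilde\phi_{p,\fb}^{i}$, and then reduces both Soundness and Weak Completeness to the Main Result (Theorem~\ref{thm:main}) by showing $Res\subseteq\mathrm{QE}(\phi_{p,\fb})$ and, conversely, $\mathrm{QE}(\phi_{p,\fb})\subseteq Res^{k}$ via the tautology $\bigvee_{j=1}^{N_{p,\fb}}\varphi_{p,\fb}^{j}\rightarrow\big((\bigvee_{j=1}^{k}\varphi_{p,\fb}^{j})\vee\psi_{p,\fb}^{k+1}\big)$. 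You instead bypass Theorem~\ref{thm:main} entirely: for Soundness you unwind membership in $temp$ and verify conditions (a)--(c) of Theorem~\ref{LyaSta} directly; for Weak Completeness you track a concrete witness $(\uu_0,r_0)$ through the algorithm and derive a contradiction from the Fixed Point Theorem when the until-exit is reached. Your route is a bit more hands-on and avoids introducing the intermediate formulas $\bar\phi_{p,\fb}^{i},\tilde\phi_{p,\fb}^{i}$; the paper's route is more structural and makes the relationship between the algorithm's output and the set $\mathrm{QE}(\phi_{p,\fb})$ explicit. One small remark on your Weak Completeness step: the chain ``each $S_k$ contains a point where $L_{\fb}^{k}p_{\uu_0}$ vanishes, giving $S_{N_{p,\fb}+1}\neq\emptyset$'' really only uses the case $k=N_{p,\fb}$ (combining the failure of $\theta_{p,\fb}^{N_{p,\fb}}$ with the validity of $\bar\theta_{p,\fb}^{N_{p,\fb}}$ at $(\uu_0,r_0)$); the earlier $k$'s are needed only to ensure the algorithm does not return before reaching that iteration.
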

\begin{proof}
\begin{itemize}
\item[(1)] The loop condition is $L_{\fb}^i p \notin \langle L_{\fb}^1 p, L_{\fb}^2 p, \ldots,$ $L_{\fb}^{i-1}p\rangle$. By Theorem \ref{thm:upper}, RLFG can run at most $N_{p,\fb}$
many iterations.
\item[(2)] Suppose $Res^{0},Res^{1},\ldots, Res^{k}$ is the longest sequence generated by RLFG when it terminates. We can inductively prove that this sequence satisfies the following properties.
\begin{itemize}
\item[(P1)] $0\leq k \leq N_{p,\fb}$.
\item[(P2)] $Res^{i}=\mathrm{QE}(\phi_{p,\fb}^{1}\wedge \phi_{p,\fb}^{2} \wedge \bar\phi_{p,\fb}^{i})$, \,\, for $0\leq i \leq k$, where \begin{eqnarray*}\bar\phi_{p,\fb}^{i}\,\define \,\forall \xx. \Big{(}\| \xx\|^2>0 \,
 \wedge \,\| \xx\|^2 < r^2)\longrightarrow
  \\
 \big( (\bigvee_{j=1}^i \varphi_{p,\fb}^{j}\,)
\,\vee \psi_{p,\fb}^{i+1}\big{)}\Big{)}\enspace.
\end{eqnarray*}
\item[(P3)] $\mathrm{QE}(\phi_{p,\fb}^{1}\wedge \phi_{p,\fb}^{2} \wedge \tilde\phi_{p,\fb}^{i})=\emptyset,$ \, for $1\leq i \leq k$, where
$$\tilde\phi_{p,\fb}^{i}\,\define \, \forall \xx. \Big( (\| \xx\|^2>0 \,
 \wedge \,\| \xx\|^2 < r^2)
\rightarrow \bigvee_{j=1}^i \varphi_{p,\fb}^{j}\Big)\, .$$
\item[(P4)] $Res=\emptyset $ if and only if either $Res^{k}=\emptyset$ or $k=N_{p,\fb}$; otherwise $Res=\mathrm{QE}( \phi_{p,\fb}^{1}\wedge \phi_{p,\fb}^{2}\wedge \tilde\phi_{p,\fb}^{k+1})$.
\end{itemize}
Suppose $(\uu,r)\in Res$, then by (P1), (P4) and
(\ref{eqn:phi}) we can get $Res \subseteq
\mathrm{QE}({\phi_{p,\fb}})$. Thus $(\uu,r) \in \mathrm{QE}({\phi_{p,\fb}}) $ and $p_{\uu}(\xx)$ is an RLF according to Theorem \ref{thm:main}.
\item[(3)] Suppose $Res=\emptyset$, then by (P4) we have either $k=N_{p,\fb}$ or $Res^{k}=\emptyset$. If $k=N_{p,\fb}(\geq 1)$,
then by (P3) and  (\ref{eqn:phi}) we get
$\mathrm{QE}(\phi_{p,\fb})=\emptyset$; if $Res^{k}=\emptyset$, from  (P1), (P2), (\ref{eqn:phi}) as well as  the validity of
\[
\Big(\bigvee_{j=1}^{N_{p,\fb}} \varphi_{p,\fb}^{j}\Big)
\rightarrow \Big( \big{(}\bigvee_{j=1}^k \varphi_{p,\fb}^{j}\,\big{)}
\,\vee \psi_{p,\fb}^{k+1}\Big), \,\, 0\leq k\leq N_{p, \fb}, \]
 we have
$\mathrm{QE}(\phi_{p,\fb})\subseteq Res^{k}=\emptyset$.  So far we have proved $Res = \emptyset$ implies  $\mathrm{QE}(\phi_{p,\fb})= \emptyset$. Again by applying Theorem \ref{thm:main} we get the final conclusion.
\end{itemize}
\end{proof}

\section{Example}\label{sec:example}

We illustrate our method for RLF generation using the following example.
\newpage

\begin{example}\label{eg:smpl}
Consider the nonlinear dynamical system \begin{equation}\label{eqn:eg1}
\left(\begin{array}{c} \dot x\\ \dot y
\end{array}\right)=\left(\begin{array}{c} -x+y^2\\ -xy
\end{array}\right)
\end{equation}
with a unique equilibrium  point $O(0,0)$. We want to establish the asymptotic stability of $O$.

First,
the linearization of (\ref{eqn:eg1}) at $O$ has the coefficient matrix
\begin{displaymath}
A=\left(\begin{array}{cc} -1 & 0\\ 0 & 0 \end{array}\right)
\end{displaymath}
with eigenvalues $-1$ and $0$, so none of the principles of stability for linear systems apply. Besides, a homogeneous quadratic Lyapunov function $x^2+axy+by^2$ for verifying asymptotic stability of (\ref{eqn:eg1}) does not exist in $\mathbb R^2$, because
$$
\mathrm{QE}\Big{(}\forall x\forall y. \left(
\begin{array}{c}
(x^2+y^2> 0\rightarrow x^2+axy+by^2>0)\\
\wedge \,(2x\dot x + ay\dot x +a x\dot y+ 2by\dot y<0)
\end{array}
\right)\Big{)}
$$
is $\textit{false}$.
However, if we try to find an RLF in $\mathbb R^2$ using
the simple template $p\,\define \, x^2+ay^2$, then Algorithm \ref{alg:lfg} returns $a=1$ at the third iteration. This means (\ref{eqn:eg1}) has an RLF $x^2+y^2$, and $O$ is asymptotically stable.

\end{example}

From this example, we can see that RLFs really extend the class of functions that can be used for asymptotic stability analysis, and our method for automatically discovering RLFs can save us a lot of effort in finding conventional Lyapunov functions in some cases.

\section{Conclusions and Future Work}\label{sec:conclusion}

In this paper, we first generalize the notion of Lyapunov functions
to \emph{relaxed Lyapunov functions} by considering the higher order Lie derivatives of a smooth function along a vector field. The main advantage of RLF is that it provides us more probability of certifying asymptotic stability. We also
propose a method for automatically discovering polynomial
RLFs for polynomial dynamical systems. Our method is complete in the sense that we can enumerate all potential polynomial RLFs by enumerating all polynomial templates for a given PDS.  We
believe that our methodology could serve as a mathematically
rigorous framework for the asymptotic stability analysis.
%Currently, we are trying to apply our method to the analysis of
%nonlinear hybrid systems, controller synthesis
%and system design.

The main disadvantage of our approach is the high computational complexity: the complexity of the first-order quantifier elimination over the closed fields  of reals is doubly exponential \cite{dh88}.
Currently we are considering improving the efficiency QE on first order polynomial formulas in special forms, and it will be the main focus of our future work.

\section{Acknowledgments}
The authors thank Professors C. Zhou, L. Yang,
B. Xia and W. Yu
for their helpful discussions on the topic.

%\bibliographystyle{IEEEtran}
%\bibliography{cdc}

\begin{thebibliography}{10}
\providecommand{\url}[1]{#1}
\csname url@samestyle\endcsname
\providecommand{\newblock}{\relax}
\providecommand{\bibinfo}[2]{#2}
\providecommand{\BIBentrySTDinterwordspacing}{\spaceskip=0pt\relax}
\providecommand{\BIBentryALTinterwordstretchfactor}{4}
\providecommand{\BIBentryALTinterwordspacing}{\spaceskip=\fontdimen2\font plus
\BIBentryALTinterwordstretchfactor\fontdimen3\font minus
  \fontdimen4\font\relax}
\providecommand{\BIBforeignlanguage}[2]{{%
\expandafter\ifx\csname l@#1\endcsname\relax
\typeout{** WARNING: IEEEtran.bst: No hyphenation pattern has been}%
\typeout{** loaded for the language `#1'. Using the pattern for}%
\typeout{** the default language instead.}%
\else
\language=\csname l@#1\endcsname
\fi
#2}}
\providecommand{\BIBdecl}{\relax}
\BIBdecl

\bibitem{Haddad-Che}
W.~M. Haddad and V.~Chellaboina, \emph{Nonlinear Dynamical Systems and Control:
  A Lyapunov-Based Approach}.\hskip 1em plus 0.5em minus 0.4em\relax Princeton
  University Press, 2008.

\bibitem{Khalil}
H.~K. Khalil, \emph{Nonlinear Systems}, 3rd~ed.\hskip 1em plus 0.5em minus
  0.4em\relax Prentice Hall, Dec. 2001.

\bibitem{Li-Slotine}
J.-J. Slotine and W.~Li, \emph{Applied Nonlinear Control}.\hskip 1em plus 0.5em
  minus 0.4em\relax Prentice Hall, 1991.

\bibitem{Tabuada}
P.~Tabuada, \emph{Verification and Control of Hybrid Systems: A Symbolic
  Approach}.\hskip 1em plus 0.5em minus 0.4em\relax Springer, 2009.

\bibitem{RS}
S.~Ratschan and Z.~She, ``Providing a basin of attraction to a target region by
  computation of lyapunov-like functions,'' in \emph{{IEEE} ICCC 2006}, Aug.
  2006, pp. 1--5.

\bibitem{Butz}
A.~Butz, ``Higher order derivatives of lyapunov functions,'' \emph{{IEEE}
  Trans. Automat. Contr.}, vol.~14, pp. 111--112, 1969.

\bibitem{Mei-Ni}
\BIBentryALTinterwordspacing
V.~Meigoli and S.~K.~Y. Nikravesh, ``Application of higher order derivatives of
  lyapunov functions in stability analysis of nonlinear homogeneous systems,''
  in \emph{IMECS 2009}, Mar. 2009.
\BIBentrySTDinterwordspacing

\bibitem{Matrosov}
V.~M. Matrosov,
``Method of vector Lyapunov functions of interconnected systems
 with distributed parameters,'' (in Russian), {\em Avtovmatika i Telemekhanika,} vol.~33,  pp. 63-75, 1972.

\bibitem{Ner-Had}
S.~G. Nersesov and W.~M. Haddad, ``On the stability and control of nonlinear
  dynamical systems via vector lyapunov functions,'' in \emph{{IEEE} CDC 2004},
  vol.~4, Dec. 2004, pp. 4107--4112.

\bibitem{Gurvits}
L.~Gurvits, ``Stability of discrete linear inclusion,'' \emph{Linear Algebra
  and its Applications}, vol. 231, pp. 47 -- 85, 1995.

\bibitem{Liberzon}
D.~Liberzon, J.~P. Hespanha, and A.~Morse, ``Stability of switched systems: a
  lie-algebraic condition,'' \emph{Systems \& Control Letters}, vol.~37, no.~3,
  pp. 117--122, 1999.

\bibitem{Agrachev-Liberzon}
A.~A. Agrachev and D.~Liberzon, ``Lie-algebraic stability criteria for switched
  systems,'' \emph{SIAM J. Control Optim.}, vol.~40, pp. 253--269, Jan. 2001.


\bibitem{SNS}
H.~Shim, D.~J. Noh and J.~H. Seo. ``Common Lyapunov function
for exponentially stable nonlinear systems,''  {\em presented
at the 4th SIAM Conference on Control and its Applications},
Florida, 1998.

\bibitem{Vu-Liberzon}
L.~Vu and D.~Liberzon, ``Common lyapunov functions for families of commuting
  nonlinear systems,'' \emph{Systems \& Control Letters}, vol.~54, no.~5, pp.
  405 -- 416, 2005.

\bibitem{Jo-Ran}
M.~Johansson and A.~Rantzer, ``Computation of piecewise quadratic lyapunov
  functions for hybrid systems,'' \emph{{IEEE} Trans. Automat. Contr.},
  vol.~43, pp. 555--559, Apr. 1998.

\bibitem{Pet-Ben}
S.~Pettersson and B.~Lennartson, ``Lmi for stability and robustness of hybrid
  systems,'' in \emph{American Control Conference 1997}, vol.~3, Jun. 1997, pp.
  1714--1718.

\bibitem{Parrilo}
\BIBentryALTinterwordspacing
P.~A. Parrilo, ``Semidefinite programming relaxations for semialgebraic
  problems,'' 2001. [Online]. Available:
  \url{http://citeseerx.ist.psu.edu/viewdoc/summary?doi=10.1.1.12.3194}
\BIBentrySTDinterwordspacing

\bibitem{Pra-Pap}
A.~Papachristodoulou and S.~Prajna, ``Analysis of non-polynomial systems using
  the sums of squares decomposition,'' \emph{Positive Polynomials in Control},
  vol. 312, pp. 23--43, 2005.

\bibitem{Pra-Pap1}
S.~Prajna and A.~Papachristodoulou, ``Analysis of switched and hybrid systems -
  beyond piecewise quadratic methods,'' in \emph{American Control Conference
  2003}, vol.~4, Jun. 2003, pp. 2779 -- 2784.

\bibitem{TP}
\BIBentryALTinterwordspacing
W.~Tan and A.~Packard. Searching for control lyapunov functions using sums of
  squares programming. [Online]. Available:
  \url{http://citeseerx.ist.psu.edu/viewdoc/summary?doi=10.1.1.69.1320}
\BIBentrySTDinterwordspacing

\bibitem{SXX}
Z.~She, B.~Xia, and R.~Xiao, ``A semi-algebraic approach for the computation of
  lyapunov functions,'' in \emph{CI 2006}, 2006, pp. 7--12.

\bibitem{Xia07}
B.~Xia, ``Discoverer: a tool for solving semi-algebraic systems,'' \emph{ACM
  Commun. Comput. Algebra}, vol.~41, pp. 102--103, Sep. 2007.

\bibitem{Giesl2}
P.~Giesl, \emph{Construction of Global Lyapunov Functions Using Radial Basis
  Functions}, ser. Lecture Notes in Mathematics.\hskip 1em plus 0.5em minus
  0.4em\relax Springer, 2007, vol. 1904.

\bibitem{Giesl3}
------, ``Construction of a local and global lyapunov function using radial
  basis functions,'' \emph{IMA J. Appl. Math.}, vol.~73, no.~5, pp. 782--802,
  2008.


\bibitem{Kra}
N.~N. Krasovskii.
{\em Problems of the Theory of Stability of Motion,} (Russian), 1959. English translation:  Stanford University Press, Stanford, CA, 1963.


\bibitem{Zubov}
V.~I. Zubov, \emph{Method of A.M. Lyapunov and Their Application}.\hskip 1em
  plus 0.5em minus 0.4em\relax P. Noordhoff, 1964.

\bibitem{clo}
D.~Cox, J.~Little, and D.~O'Shea, \emph{Ideals, Varieties, and Algorithms: An
  Introduction to Computational Algebraic Geometry and Commutative Algebra},
  2nd~ed.\hskip 1em plus 0.5em minus 0.4em\relax Springer, 1997.

\bibitem{tarski51}
A.~Tarski, \emph{A Decision Method for Elementary Algebra and Geometry}.\hskip
  1em plus 0.5em minus 0.4em\relax Berkeley: University of California Press,
  May 1951.

\bibitem{Redlog}
\BIBentryALTinterwordspacing
A.~Dolzmann, A.~Seidl, and T.~Sturm. (2006, Nov.) Redlog user manual. Edition
  3.1, for redlog Version 3.06 (reduce 3.8). [Online]. Available:
  \url{http://redlog.dolzmann.de/downloads/}
\BIBentrySTDinterwordspacing

\bibitem{qepcad}
\BIBentryALTinterwordspacing
C.~W. Brown. QEPCAD. [Online]. Available:
  \url{http://www.usna.edu/Users/cs/qepcad/B/QEPCAD.html}
\BIBentrySTDinterwordspacing

\bibitem{Maple}
\BIBentryALTinterwordspacing
Maple 14 user manual. Maplesoft. [Online]. Available:
  \url{http://www.maplesoft.com/documentation_center/}
\BIBentrySTDinterwordspacing

\bibitem{dh88}
J.~H. Davenport and J.~Heintz, ``Real quantifier elimination is doubly
  exponential,'' \emph{J. Symb. Comput.}, vol.~5, no. 1-2, pp. 29--35, 1988.

\end{thebibliography}

\end{document}